\newtheorem{prop}{Proposition}
\DeclareMathOperator{\e}{\mathrm{e}}
\title{\LARGE \bf
Output Corridor Control via Design of Impulsive Goodwin's Oscillator
}
\author{Alexander Medvedev$^{1}$, Anton V. Proskurnikov$^{2}$, and Zhanybai T. Zhusubaliyev$^{3,4}$
\thanks{* AM was partially supported the Swedish Research Council under grant 2019-04451. ZhZh was partially supported  by the grant 14-22 of the Osh State University.}
\thanks{$^{1}$Department of Information Technology, 
        Uppsala University, SE-752 37 Uppsala, Sweden
        [{\tt\small alexander.medvedev@it.uu.se}]}%
\thanks{$^{2}$Department of Electronics and Telecommunications, Politecnico di Torino, Turin, Italy, 10129 [{\tt\small anton.p.1982@ieee.org}]}%
\thanks{$^{3}$Department of Computer Science, International Scientific Laboratory for
Dynamics of Non-Smooth Systems, Southwest State University, Kursk, Russia
        [{\tt\small zhanybai@hotmail.com}]}%
\thanks{$^{4}$Faculty of Mathematics and Information Technology, Osh State University, Lenin st. 331, 723500, Osh, Kyrgyzstan        }%
}
\begin{document}

\maketitle
\thispagestyle{empty}
\pagestyle{empty}

\begin{abstract}
In the Impulsive Goodwin's oscillator (IGO), a continuous positive linear time-invariant (LTI) plant is controlled by an amplitude- and frequency-modulated feedback into an oscillating solution. Self-sustained oscillations in the IGO model have been extensively used to portray periodic rhythms in endocrine systems,  whereas the potential of the concept as a controller design approach still remains mainly unexplored. This paper proposes an algorithm to  design the feedback of the IGO so that the output of the continuous plant is kept (at stationary conditions) within a pre-defined corridor, i.e. within a bounded interval of values. The presented framework covers single-input single-output LTI plants as well as positive Wiener and Hammerstein models that often appear in process and biomedical control. A potential application of the developed impulsive control approach to a minimal Wiener model of pharmacokinetics and pharmacodynamics of a muscle relaxant used in general anesthesia is discussed.

\end{abstract}

\section{INTRODUCTION}
Governing the output of a dynamical plant to a given set point is by far the most frequently treated problem in control engineering.  Since exact stabilization to a point is impossible in practice due to the effect of disturbances and model uncertainty, it is customary to specify a range  of values that  the controlled plant output is allowed to evolve within.

With respect to tracking of a time-varying reference signal by means of Model Predictive Control with zero-order hold, the output corridor control problem is considered in e.g. \cite{RCS22}. Solving the output corridor problem with a simple controller structure is seldom addressed even for a linear time-invariant (LTI) plant without uncertainty. 

In contrast with a conventional continuous- or discrete-time feedback, event-based control  \cite{HJT12} suggests that a control action is taken when it is necessary to fulfill the control objective. This principle is well in line with e.g. biological control mechanisms that seek to minimize the energy and communication load \cite{WTT10} inflicted by the controller while maintaining homeostasis.  An impulsive controller presents then an attractive option since it combines the principle of event-based control with minimal interaction between the controller and the actuator. A downside of it is that the closed-loop dynamics of impulsive control systems are highly nonlinear and non-smooth, thus requiring a combination of analytical and computational tools to be used in controller design.

The Impulsive Goodwin's oscillator (IGO) \cite{MCS06,Aut09} is a mathematical model that is devised to portray the pulsatile  regulation featured by many endocrine feedback systems. Ostensibly, the goal of endocrine regulation is to keep the concentrations of the involved hormones within certain physiologically beneficial bounds. Impulsive control is utilized in the human organism when it comes  to regulating e.g. the sex hormones, growth hormone, and stress hormone.   Due to the biochemical nature of the considered system, where the state variables correspond to hormone concentrations, the controlled plant is assumed to be positive. This kind of models are typical to chemical and biological systems, e.g. in analysis control of bioreactors or population dynamics.
 
Most of the literature devoted to the IGO covers analysis of the dynamical behaviors arising in autonomous \cite{ZCM12b} and forced \cite{MPZh18} model operation, as well as the complex phenomena due to the introduction of point-wise  \cite{CMM14} or distributed time delay \cite{CM16}. More recently, the problem of IGO design to admit a desired (stable) periodic solution is addressed in \cite{MPZh23,MPZh23a}, primarily  with respect to (discrete) dosing applications. 

The present paper takes further the IGO design approach to guarantee that the obtained periodic solution results in the output of the continuous part of the model being constrained to a pre-defined (positive) interval of values and, thus, solves the output corridor control problem. It complements the results in \cite{MPZh23,MPZh23a} where the focus is on producing, via pulse-modulated feedback design, a given stable periodic dosing scheme.
In general, the presented control law constitutes an event-based controller of a simple structure for a positive  third-order LTI plant. Making use of the recent results in  \cite{PRM23}, it can be generalized to an arbitrary plant order. Since the IGO constitutes an example of nonlinear and non-smooth control, it is suitable for control of block-oriented nonlinear models where the (static) plant nonlinearity can be incorporated into the feedback modulation functions.

The rest of the paper is as follows. First, an example of a pharmaceutical application is briefly described to motivate the proposed control strategy (Section~\ref{sec:NMB}). Then, the mathematical model of the plant and the controller structure are defined, and the control problem at hand is formally stated 
(Section~\ref{sec:problem}). Its solution is presented in Section~\ref{sec:solution}.
Finally, a controller that keeps the measured output in the pharmaceutical application within a desired corridor is designed and studied in simulation (Section~\ref{sec:design}).

\section{Example of dosing application}\label{sec:NMB}
Neuromuscular blockade (NMB) is induced in surgery during
anaesthesia by administering muscle relaxant
agents to improve surgical conditions.  To minimize adverse side-effects  of the relaxants,  the minimum amount of drug is  administered to adequately paralyze the patient, which requires accurate objective quantification of the NMB depth.

A continuous Wiener model for NMB with the muscle relaxant {\it atracurium} under general  closed-loop  anesthesia  is introduced in \cite{SWM12}. The parameters of the model  are  estimated  from clinical data by means of a Particle Filter and an extended Kalman Filter  in \cite{RMM14}. 
The model input $u(t)$   is the administered atracurium rate in $\lbrack \mu\mathrm{g}\ \mathrm{kg}^{-1}\mathrm{min}^{-1} \rbrack $, positive and bounded, i.e. $0\le u(t) \le u_{\max}$. 
 The model output $y(t)$ $\lbrack \% \rbrack$ is the NMB level measured by a train-of-four monitor (peripheral nerve stimulator). Full recovery from NMB (zero drug concentration) corresponds then to $y(t)=100\%$. 

The linear model part consists of the transfer function 
\begin{equation}\label{eq:lin_NMB}
W(s)=\frac{\bar Y(s)}{U(s)}=\frac{v_1 v_2 v_3 \alpha^3}{(s+v_1\alpha)(s+v_2\alpha)(s+v_3\alpha)},
\end{equation}
where $\bar Y (s)$ is the Laplace transform of the  linear dynamic part 
output $\bar y(t)$ and $U(s)$ is the Laplace transform of the input.  The parameter $0<\alpha\le  0.1$ is patient-specific and estimated from data whereas the other transfer function parameters are fixed, $v_1=1$, $v_2=4$, and $v_3=10$. The NMB model output is related to the output of the transfer function by the nonlinear Hill-type function
\begin{equation}\label{eq:nonlin_NMB}
y(t)= \varphi(\bar y)\triangleq\frac{100 C_{50}^\gamma}{ C_{50}^\gamma + {\bar y}^\gamma(t)},
\end{equation}
where $C_{50}=3.2425$ $ \mu \mathrm{ g} \ \mathrm{ml}^{-1} $ is the drug concentration producing 50\% of the maximum effect and $0<\gamma\le 10$ is a patient-specific parameter.

In the beginning of a surgery, there is no muscle relaxant in the blood stream, i.e. $\bar y=0$ and $y(t)=100\%$. Then a larger bolus dose of {\it atracurium} (calculated as 400--500 $\mu \mathrm{ g}$ for a $\mathrm{kg}$ of patient weight) is administered to induce the state of NMB. When the desired NMB depth is reached, it is maintained by repeating a suitable drug dose each $15-25~\mathrm{min}$. In closed-loop anesthesia \cite{IMK15}, one thus distinguishes between induction phase and maintenance phase, with typically distinct control objectives. Notably, the instructions for the use of a muscle relaxant agent are written in terms of doses whereas a conventional feedback controller manipulates the flow of the drug. In the latter case, the medication is administered continuously over time and only the cumulative dose over a time interval can be monitored.

\section{Impulsive  Output Corridor Control}\label{sec:problem}

Motivated by the introductory example in Section~\ref{sec:NMB},   the control problem under consideration is formally stated below. Transfer function~\eqref{eq:lin_NMB} has a state-space realization
\begin{equation}                            \label{eq:state-space-NBM}
\dot{x}(t) =Ax(t)+Bu(t), \quad \bar y(t)=Cx(t),
\end{equation}
where $$A=\begin{bmatrix} -a_1 &0 &0 \\ g_1 & -a_2 &0 \\ 0 &g_2 &-a_3 \end{bmatrix}, B=\begin{bmatrix} 1 \\ 0 \\ 0\end{bmatrix}, C =\begin{bmatrix} 0 &0 &1 \end{bmatrix},
$$
$a_i=v_i\alpha>0$ are  positive distinct constants and $g_1,g_2>0$ are chosen to yield $g_1g_2=v_1v_2v_3\alpha^3$.

The design objective is to obtain an impulsive controller satisfying the following condition: the measured output of \eqref{eq:nonlin_NMB}
(after a transient period) is maintained in the pre-defined corridor $\lbrack y_{\min},y_{\max}\rbrack$, where $y_{\min}>0$.
To ensure that the output $y(t)$ does not leave this corridor for $t$ being large, we impose a formally stronger requirement
\begin{equation}\label{eq:range-open}
0< y_{\min}\le\liminf_{t\to\infty}y(t)\leq\limsup_{t\to\infty}y(t)\le y_{\max}.
\end{equation}


With  $\varphi$  defined by \eqref{eq:nonlin_NMB} as a monotonous decreasing function, let $\bar y_{\min}$ and $\bar y_{\max}$ be the solutions to the equations 
\[
y_{\max}=\varphi(\bar y_{\min}), \quad y_{\min}=\varphi(\bar y_{\max}),
\]
which do not generally require an analytical expression for $\varphi^{-1}(\cdot)$ and can be found numerically.
Then \eqref{eq:range-open} can be rewritten in the following equivalent form
\begin{equation}\label{eq:range-open1}
\bar y_{\min}\le\liminf_{t\to\infty}\bar y(t)
<\limsup_{t\to\infty}\bar y(t)\le \bar y_{\max}.
\end{equation}


\paragraph*{Impulsive controller} The impulsive nature of the sought controller is implied by the strategy commonly pursued in manual dosing applications, i.e. where a dose is administered and then the process response to it is monitored to assess the time and amount of the next dosing event. 
Using the Dirac $\delta$-function formalism, the controller is 
\[
u(t)=\sum_{n=0}^{\infty}\lambda_n\delta(t-t_n),
\]
where $0=t_0<t_1<\ldots$ is the sequence of discrete (not necessarily equidistant)  time instants, $t_n\to\infty$ and $\lambda_n$ are impulse weights corresponding to individual doses.
To avoid distributions (i.e. the $\delta$-functions), the closed-loop system is  recast as a hybrid system comprising  the continuous-time dynamics in \eqref{eq:state-space-NBM} for $t\in(t_n,t_{n+1})$
\begin{equation}                            \label{eq:1a}
\dot{x}(t) =Ax(t), \quad \bar y(t)=Cx(t),\quad\forall t\in(t_n,t_{n+1})
\end{equation}
subject to the instantaneous jumps at the instants $t_0,t_1,\ldots$ 
\begin{equation} \label{eq:1b}
x(t_{n}^+)=x(t_n^-)+\lambda_n B,\quad n=0,1,\ldots
\end{equation}
The minus and plus in a superscript in~\eqref{eq:1b} stand for the left-sided and
a right-sided limit, respectively.
Denoting $X_n\triangleq x(t_n^-)$, the  discrete-time dynamics of $X_n$ are given by
\begin{equation} \label{eq:1c}
X_{n+1}= \e^{(t_{n+1}-t_n)A}(X_n+\lambda_n B),\quad n=0,1,\ldots
\end{equation}
The knowledge of $X_n$ allows to uniquely recover the trajectory on the interval $(t_n,t_{n+1})$ via~\eqref{eq:state-space-NBM} and~\eqref{eq:1b}:
\begin{equation} \label{eq:1d}
x(t)=\e^{(t-t_n)A}(X_n+\lambda_n B),\quad t\in(t_n,t_{n+1}).
\end{equation}

To relate the timing and magnitude of the jumps to the measured continuous output, introduce the functions 
\begin{equation}\label{eq:modulation}
    T_n =\Phi(\bar y(t_n)), \quad \lambda_n=F(\bar y(t_n)), \quad T_n=t_{n+1}-t_n.
\end{equation}
In pulse-modulated control \cite{GC98}, $\Phi(\cdot)$ is termed the \emph{frequency modulation} function of the impulsive feedback, and 
$F(\cdot)$ is its \emph{amplitude modulation} function. 

Notably, the closed-loop pulse-modulated system \eqref{eq:1a}, \eqref{eq:1b}, \eqref{eq:modulation} with linear output $\bar y$ is identical with the Impulsive Goodwin Oscillator as introduced in \cite{MCS06}, \cite{Aut09} assuming that
the modulation functions $F$ and $\Phi$ are well-defined, continuous and monotonic on $[0,\infty)$, $F(\cdot)$ is non-increasing, $\Phi(\cdot)$ is non-decreasing, and
\begin{equation}                             \label{eq:2a}
0<\Phi_1\le \Phi(\cdot)\le\Phi_2, \quad 0<F_1\le F(\cdot)\le F_2,
\end{equation}
where $\Phi_1$, $\Phi_2$, $F_1$, $F_2$ are positive constants.

\paragraph*{1-cycle}  A periodic solution of \eqref{eq:1a}, \eqref{eq:1b}, \eqref{eq:modulation} is called 1-cycle if there is only one firing of the pulse-modulated feedback in the least period. Then, a 1-cycle is completely described by two parameters $\lambda, T$ where $\lambda_n=\lambda$, $T_n=T$ for all $n=0,1,\ldots$  From \eqref{eq:1c}, for a 1-cycle  with the initial condition $x(0^-)=X$, it applies 
\begin{align*}
    x(0^+)&=X+\lambda B, \\
    x(T^-)&= \e^{AT}x(0^+)=\e^{AT}(X+\lambda B).
\end{align*}
Now, since the solution is $T$-periodic, one has
\begin{equation}\label{eq:1-cycle}
    X=Q(X), \quad Q(\xi)\triangleq\mathrm{e}^{A\Phi(C\xi)}\left( \xi+ F(C\xi)B \right),
\end{equation}
that is, $X$ is the fixed point of the map $Q(\cdot)$ that completely characterizes the 1-cycle with the parameters $\Phi(CX)=T$, $F(CX)=\lambda$.

\paragraph*{Output corridor impulsive control problem} Now the controller design problem at hand can be formulated in the following way. Given nonlinear plant model \eqref{eq:state-space-NBM}, \eqref{eq:nonlin_NMB}, design the modulation functions $\Phi(\cdot)$ and $F(\cdot)$ calculating the sequences $\lambda_n$, $T_n$ from the continuous plant output $y(t)$ such that   inequality~\eqref{eq:range-open} (or, equivalently, \eqref{eq:range-open1}) is satisfied.

Notice that the minimal time interval condition $t_{n+1}-t_n\geq \Phi_1, \ n\in\mathbb{N}_0$ implies that the solution is well defined for $t\geq 0$ and no Zeno trajectories exist. This requirement is also natural in drug dosing applications, where enforcing a minimal interval between two consecutive doses is critical to patient safety. The upper bound on the interval between two doses is also imposed by \eqref{eq:2a}, i.e. $t_{n+1}-t_n\leq \Phi_2$. 

\section{Solution}\label{sec:solution}
The output corridor impulsive control problem formulated above does not explicitly specify what kind of dynamics are exhibited by the  closed-loop system. A logical and straightforward choice is to select the impulsive feedback so that it forces the system into a sustained periodic solution. The simplest instance of periodic solution in an impulsive system is a $1$-cycle where the impulses of the constant weight $\lambda_n=\lambda, \ n\in \mathbb{Z}$ occur at equidistant time instants, i.e. $T_n=T, \ n\in \mathbb{Z}$. In fact, other types of oscillations,  e.g. $n$-cycles, chaotic or quasiperiodic solutions, are also feasible but not considered here. Besides satisfying the output corridor condition in \eqref{eq:range-open}, the designed $1$-cycle has to be orbitally stable to enable convergence to the desired periodic solution under transitory output perturbation.

As seen from \eqref{eq:1-cycle}, a 1-cycle is defined by the fixed point $X$. Since it is also completely characterized by the cycle parameters $\lambda, T$, there is a way for calculating the fixed point from the plant and  cycle parameters. 
Following \cite{D03,DeBoor2005}, introduce the first divided difference of a function $f$ as a function of two variables 
\[
f[z_0,z_1]\triangleq \frac{f(z_1)-f(z_0)}{z_1-z_0},
\]
which expression is well defined if and only if $f(z_1)$, $f(z_0)$ exist and $z_0\ne z_1$. The second divided difference is a  function of three variables and is defined by
\[
f[z_0,z_1,z_2]\triangleq\frac{f[z_1,z_2]-f[z_0,z_1]}{z_2-z_0},
\]
where $f(z_0),f(z_1),f(z_2)$ exist and $z_0,z_1,z_2$ are pairwise different. Higher-order divided differences are then calculated in a recursive manner.

Denote, for brevity,
\[
\mu(z)\triangleq \frac{\e^z}{1-\e^z}, 
\quad z\ne 0.
\]

\begin{prop}{(\cite[Proposition~2]{MPZh23})}\label{th:fp}
    Given the parameters of $1$-cycle $T>0$, $\lambda>0$, the fixed point $ X= [x_{1} \ x_{2} \ x_{3} ]^\intercal, X>0 $ of the map $Q$ in~\eqref{eq:1-cycle} is calculated as
    \begin{equation} \label{eq:x0}
        X=\lambda \mu(AT) B,
    \end{equation}
    or, in terms of individual elements
\begin{align}\label{eq:x0_elements}
     x_{1}&=\lambda\mu(-a_1T), \\
     x_{2}&=\lambda g_1T\mu[-a_1T,-a_2T],\nonumber\\
     x_{3}&=\lambda g_1g_2T^2\mu[-a_1T,-a_2T,-a_3T]. \nonumber
 \end{align}
\end{prop}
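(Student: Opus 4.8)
The plan is to use that, along a 1-cycle, the modulation functions are frozen at the cycle parameters, which turns the nonlinear fixed-point equation into a linear one. First I would substitute $\Phi(CX)=T$ and $F(CX)=\lambda$ into the map $Q$ of~\eqref{eq:1-cycle}; the condition $X=Q(X)$ then reduces to $X=\e^{AT}(X+\lambda B)$, i.e. $(I-\e^{AT})X=\lambda\,\e^{AT}B$. Everything that follows is the inversion of $I-\e^{AT}$ and the identification of the resulting matrix function.

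Next I would check that $I-\e^{AT}$ is nonsingular. Since $A$ has the distinct real eigenvalues $-a_1,-a_2,-a_3$, the eigenvalues of $\e^{AT}$ are $\e^{-a_iT}\in(0,1)$ for $T>0$, so $1$ is not among them and the inverse exists. Hence $X=\lambda(I-\e^{AT})^{-1}\e^{AT}B$. Because $\mu(z)=\e^z/(1-\e^z)$ is analytic at each eigenvalue $-a_iT$, the matrix function $\mu(AT)$ is well defined and equals $\e^{AT}(I-\e^{AT})^{-1}$, which gives the compact form~\eqref{eq:x0}. I would confirm this by direct substitution: from $\mu(z)+1=1/(1-\e^z)$ one gets $\e^{AT}\bigl(\mu(AT)+I\bigr)B=\e^{AT}(I-\e^{AT})^{-1}B=\mu(AT)B$, so the stated $X$ solves the fixed-point equation, and nonsingularity makes it the unique solution.

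For the component-wise expressions~\eqref{eq:x0_elements} I would compute the first column of $\mu(AT)$. The matrix $AT$ is lower bidiagonal with distinct diagonal entries $-a_iT$ and subdiagonal entries $g_1T,g_2T$. I would conjugate by the diagonal matrix $D=\mathrm{diag}(1,\,g_1T,\,g_1g_2T^2)$ to bring $AT$ to the canonical unit-subdiagonal bidiagonal form, apply the Opitz divided-difference representation of a matrix function to this canonical matrix (its first column being $\mu(-a_1T)$, $\mu[-a_1T,-a_2T]$, $\mu[-a_1T,-a_2T,-a_3T]$), and conjugate back; the entries of $D$ reappear exactly as the prefactors $g_1T$ and $g_1g_2T^2$ in~\eqref{eq:x0_elements}. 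Positivity $X>0$ then follows from the explicit formulas together with $\mu>0$, $\mu'(z)=\e^z/(1-\e^z)^2>0$, and $\mu''(z)=\e^z(1+\e^z)/(1-\e^z)^3>0$ on $(-\infty,0)$, since the mean-value form of divided differences gives $\mu[z_0,z_1]=\mu'(\xi)>0$ and $\mu[z_0,z_1,z_2]=\tfrac12\mu''(\eta)>0$; alternatively, $A$ is Metzler, so $\mu(AT)=\sum_{k\ge1}\e^{kAT}$ is a convergent sum of nonnegative matrices whose first column is strictly positive because of the cascade $1\to2\to3$ with $g_1,g_2>0$.

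The hard part will be the divided-difference step, i.e. rigorously tying the off-diagonal entries of the matrix function $\mu(AT)$ to scalar divided differences of $\mu$ at the nodes $-a_iT$. I would either cite the classical Opitz formula (in the spirit of~\cite{D03,DeBoor2005}) or, to be self-contained, prove it by induction on the bidiagonal structure using the recursion $f[z_0,\dots,z_k]=(f[z_1,\dots,z_k]-f[z_0,\dots,z_{k-1}])/(z_k-z_0)$ together with the action of $AT$ on the standard basis. The remaining pieces---the spectrum of $\e^{AT}$, the identity $\mu+1=1/(1-\e^z)$, and the sign computations---are routine.
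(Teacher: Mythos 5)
Your proposal is correct, and it follows exactly the route the paper sets up: the paper states Proposition~\ref{th:fp} without proof (importing it from \cite[Proposition~2]{MPZh23}), but its introduction of the divided differences of \cite{D03,DeBoor2005} and of the function $\mu$ signals precisely your derivation --- freeze the modulation at $\Phi(CX)=T$, $F(CX)=\lambda$, invert the nonsingular $I-\e^{AT}$ (eigenvalues $\e^{-a_iT}\in(0,1)$ since the $a_i$ are distinct and positive), identify $\e^{AT}(I-\e^{AT})^{-1}=\mu(AT)$, and read off the first column via the Opitz divided-difference representation after the diagonal scaling $\mathrm{diag}(1,\,g_1T,\,g_1g_2T^2)$. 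Your two positivity arguments (mean-value form of divided differences with $\mu,\mu',\mu''>0$ on $(-\infty,0)$, or the Metzler/geometric-series expansion $\mu(AT)=\sum_{k\ge1}\e^{kAT}$ with the strictly positive first column from the cascade structure) are both sound, and the uniqueness remark matches the paper's observation that the closed-form expressions imply it.
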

The closed-form expressions for the fixed point coordinates imply its uniqueness.


Now the relationship between the 1-cycle parameters and the output corridor limits can be established.
\begin{prop}\label{th:corridor}
Let closed-loop system  \eqref{eq:1a}, \eqref{eq:1d}, \eqref{eq:modulation} evolve in the 1-cycle corresponding to a fixed point $X$ defined by Proposition~\ref{th:fp}.
Let $\tau_1<\tau_2<\ldots<\tau_k$ denote all roots\footnote{Since the left-hand side of~\eqref{eq:roots_dy} is a real-analytic function on $\mathbb{R}$, it has only finite number of roots on every closed interval, e.g., on $[0,T]$.} of the equation
\begin{equation}\label{eq:roots_dy}
  C\e^{A\tau}{(I-\e^{AT})}^{-1}AB=0
\end{equation}
on the interval $(0,T)$. 
Then the system output satisfies inequalities \eqref{eq:range-open1} with 
\begin{align*}
    \bar y_{\max}&=\lambda\max_{i=1,k} C\e^{A\tau_i}{(I-\e^{AT})}^{-1}B,\\
    0<\bar y_{\min}&=\lambda\min_{i=1,k} C\e^{A\tau_i}{(I-\e^{AT})}^{-1}B<x_3.
\end{align*}
\end{prop}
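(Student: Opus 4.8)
The plan is to reduce the statement to an elementary study of the scalar, $T$-periodic output generated along the $1$-cycle. First I would rewrite the output using Proposition~\ref{th:fp}. Since the scalar identity $\mu(z)+1=1/(1-\e^{z})$ holds, the corresponding matrix-function identity $\mu(AT)+I=(I-\e^{AT})^{-1}$ holds as well, so \eqref{eq:x0} gives $X+\lambda B=\lambda(I-\e^{AT})^{-1}B$. Substituting into the trajectory \eqref{eq:1d} (with $t_n=0$) yields, for $\tau\in(0,T)$,
\[
\bar y(\tau)=C\e^{A\tau}(X+\lambda B)=\lambda\,C\e^{A\tau}(I-\e^{AT})^{-1}B.
\]
For the realization in \eqref{eq:state-space-NBM} one has $CB=0$, so the jump \eqref{eq:1b} leaves the output unchanged; hence $\bar y(0^+)=\bar y(T^-)=CX=x_3$ and $\bar y(\cdot)$ extends to a smooth $T$-periodic function. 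As the solution rests on the $1$-cycle, $\bar y$ is exactly periodic, so $\liminf_{t\to\infty}\bar y(t)=\min_{[0,T]}\bar y=:\bar y_{\min}$ and $\limsup_{t\to\infty}\bar y(t)=\max_{[0,T]}\bar y=:\bar y_{\max}$, and \eqref{eq:range-open1} will follow once these two numbers are identified.

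Next I would locate the extrema. Because $A$ commutes with $\e^{A\tau}$ and with $(I-\e^{AT})^{-1}$, differentiating gives $\tfrac{d}{d\tau}\bar y(\tau)=\lambda\,C\e^{A\tau}(I-\e^{AT})^{-1}AB$, whose zeros on $(0,T)$ are precisely the roots $\tau_1<\dots<\tau_k$ of \eqref{eq:roots_dy}. A smooth periodic function attains its extrema at critical points, so evaluating $\bar y$ at the $\tau_i$ yields the claimed formulas for $\bar y_{\max}$ and $\bar y_{\min}$. That the bounds differ --- the strict middle inequality in \eqref{eq:range-open1} --- follows from observability of $(C,A)$: $\bar y$ is constant only if $C\e^{A\tau}v\equiv0$ with $v=(I-\e^{AT})^{-1}AB$, forcing $v=0$; but $AB=[-a_1\ g_1\ 0]^{\intercal}\neq0$ and $I-\e^{AT}$ is invertible, so $v\neq0$ and $\bar y$ genuinely oscillates.

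Positivity of $\bar y_{\min}$ I would read off from the positive-system structure: $A$ is lower-triangular and Metzler, hence $\e^{A\tau}\ge0$ entrywise with $(3,3)$-entry $\e^{-a_3\tau}>0$; as $X>0$ (Proposition~\ref{th:fp}) and $\lambda>0$, the vector $X+\lambda B$ is strictly positive and $\bar y(\tau)\ge\e^{-a_3\tau}x_3>0$ for every $\tau$, so $\bar y_{\min}>0$.

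The delicate point is the strict inequality $\bar y_{\min}<x_3$. Since $x_3=\bar y(0)=\bar y(T)$ is the output at the firing instant, it suffices to show that this instant is not a minimizer. The realization has relative degree three, so in addition $CAB=0$; hence $\bar y'$ is also continuous across the jump, giving $\bar y'(0^+)=\bar y'(T^-)=CA(X+\lambda B)=CAX=g_2x_2-a_3x_3$. If this number is nonzero, then by periodicity $\bar y$ passes strictly monotonically through the seam $\tau\equiv0$, so on one side it dips below $x_3$ and therefore $\bar y_{\min}<x_3$, the minimum being attained at an interior $\tau_i$. The main obstacle is thus to prove $CAX=g_2x_2-a_3x_3\neq0$ for every admissible $T>0$. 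Using \eqref{eq:x0_elements} this equals $\lambda g_1g_2T\big(\mu[-a_1T,-a_2T]-a_3T\,\mu[-a_1T,-a_2T,-a_3T]\big)$, and expanding the second divided difference by its recursion reduces the claim to a non-vanishing statement for a weighted difference of first divided differences of $\mu$. Since $\mu$ is smooth, strictly increasing and strictly convex on $(-\infty,0)$ (one computes $\mu'(z)=\e^{z}/(1-\e^{z})^{2}>0$ and $\mu''(z)=\e^{z}(1+\e^{z})/(1-\e^{z})^{3}>0$), its divided differences are positive and strictly monotone in their nodes; tracking these monotonicity relations for the distinct nodes $-a_3T<-a_2T<-a_1T<0$ to certify that the combination cannot vanish is the one genuinely computational --- and least automatic --- step of the argument.
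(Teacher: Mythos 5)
Your route is essentially the paper's: the identity $X+\lambda B=\lambda(I-\e^{AT})^{-1}B$, the continuity of $\bar y$ across firings via $CB=0$, the characterization of the extrema as values at the interior roots of \eqref{eq:roots_dy}, and the positivity of $\bar y_{\min}$ via the Metzler structure are exactly the steps of the paper's proof (your observability argument for non-constancy of $\bar y$ is a small correct addition). However, the point you defer at the end is a genuine gap, not a routine verification: the exclusion of the seam $t=0\equiv T$ --- on which both the interior attainment of the extrema and the strict inequality $\bar y_{\min}<x_3$ rest --- requires $CAX=g_2x_2-a_3x_3\neq 0$, and you never prove it. Moreover, the tools you invoke do not suffice. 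Expanding the second divided difference in $\mu[-a_1T,-a_2T]-a_3T\,\mu[-a_1T,-a_2T,-a_3T]$ reduces the question (after dividing by the negative factor $z_3-z_1$, where $z_i=-a_iT$) to the sign of $a_1\mu[z_1,z_2]-a_3\mu[z_2,z_3]$, in which the two effects pull in opposite directions: $a_1<a_3$, while convexity of $\mu$ on $(-\infty,0)$ gives $\mu[z_1,z_2]>\mu[z_2,z_3]$. Positivity, monotonicity, and convexity of $\mu$ alone therefore do not decide the sign, so the ``tracking of monotonicity relations'' you gesture at does not go through as stated.

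The paper closes precisely this step by importing an external result, \cite[Proposition~3]{MPZh23}, which gives the stronger element-wise inequality $AX<0$ for the 1-cycle fixed point. From it, $\dot{\bar y}(0^+)=CA(X+\lambda B)=CAX<0$ (using $CAB=0$, as you correctly note), and $\dot{\bar y}(t)=C\e^{(t-T)A}AX\to CAX<0$ as $t\to T^-$, so $\bar y$ strictly decreases through the seam: the minimum is interior and strictly below $x_3$, and $\bar y(0)=\bar y(T)$ is not the maximum either. (The paper additionally uses $AX<0$ together with the Metzler property to conclude $C\e^{At}AX<0$ on all of $[0,T]$ for the initial-condition response.) To complete your argument you must either cite that proposition or supply a self-contained proof that $g_2x_2-a_3x_3<0$, e.g., by a comparison argument along the periodic orbit rather than by node-monotonicity of divided differences; as written, the step you yourself label ``least automatic'' is exactly the missing mathematical content.
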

\begin{proof} Consider the solution $x(t)$ on the interval $\lbrack 0, T^-\rbrack$.  With the initial condition $x(0^-)=X$, \eqref{eq:1b} gives
\[
x(0^+)=X+ \lambda B.
\]
Using \eqref{eq:1c}, after the jump, the state vector is governed for $t\in \lbrack 0^+, T^-\rbrack$ by
\begin{align*}
    x(t)&= \e^{At}(X+\lambda B)= \lambda \e^{At} \left( {\e^{AT}(I-\e^{AT})}^{-1}+I \right)B\\
    &= \lambda  \e^{At} {(I-\e^{AT})}^{-1}B,
\end{align*}
where the expression for the fixed point in \eqref{eq:x0} is utilized. 

Notice that $y(t)$ is continuous at all $t$ is continuous despite the jumps in the state vector~\eqref{eq:1b}, because $CB=0$. Also, $y(0)=y(T)$ in view of the $T$-periodicity. Hence, $y$ attains its minimal and maximal values on $[0,T]$ in view of the Weierstrass theorem.
We will show that neither the minimum nor the maximum can be attained at the endpoints $t=0$ and $t=T$.
Indeed, 
\[
\bar y(t)=C \e^{At}(X+ \lambda B)>0
\]
where $X>0$, $\lambda>0$, and $A$ is Metzler. The output $\bar y(t)$ is thus decomposed into the sum 
\begin{align*}
    \bar y(t)&=\bar y_1(t)+\bar y_2(t), \\
    \bar y_1(t)&=C \e^{At}X>0, \\
    \bar y_2(t)&= C \e^{At}B\ge 0.
\end{align*}
Consider the derivative of the initial conditions response 
    \[
    \dot {\bar y}_1(t)=C\e^{A t}AX.
    \]
 According to \cite[Proposition~3]{MPZh23}, it holds that $AX<0$, where the inequality is understood element-wise. Being defined by \eqref{eq:state-space-NBM}, the matrix $A$ is Metzler and then the exponential matrix maps a negative vector to a negative vector $\forall t$, i.e.  $\dot {\bar y}_1(t)<0, t\in \lbrack 0, T \rbrack$ and ${\bar y}_1(t)$ is monotonously decreasing over a period of the solution $x(t)$.
The derivative of the impulse response 
 \[
 \dot {\bar y}_1(t)=C\e^{A t}AB
 \]
 is not sign-definite because the vector $AB$ includes positive, negative, and zero elements. From the matrices of \eqref{eq:state-space-NBM}, one has ${\bar y}_1(0)=\dot {\bar y}_1(0)=0$. Thus $\dot {\bar y}(0)<0$ and the output $\bar y(t)$ is  decreasing from its initial condition $\bar y(0)=x_3$, which cannot be the minimal value of $\bar y$ on $[0,T]$. Similarly, one may notice that $x(t)=\e^{(t-T)A}x(T^-)=\e^{(t-T)A}X$ for $t\in(0,T)$, and hence $\dot{\bar y}(t)=C\e^{(t-T)A}AX\to CAX<0$ as $t\to T^-$, in other words, $\bar y(t)>\bar y(T)$ for $t\in(0,T)$ being close to $T$, and thus $\bar y(0)=\bar y(T)$ is also not the maximum.

We have shown that both the minimal and the maximal value of $\bar y$ are attained at $t\in (0,T)$. By noticing that
\[
\dot x(t)= \lambda  \e^{At} {(I-\e^{AT})}^{-1}AB\;\;\forall t\in(0,T),
\]
one proves that to achieve an extremum at a time instant $\tau$, the derivative of the output has to satisfy
\[
\dot{\bar y}(t)|_{t=\tau}=C\dot x(t)= \lambda  C\e^{A\tau} {(I-\e^{AT})}^{-1}AB=0,
\]
or, since $\lambda\ne 0$, one arrives to \eqref{eq:roots_dy}. The extreme output values are then
\[
\bar y(\tau_i)=\lambda\max_{i=1,k} C\e^{A\tau_i}{(I-\e^{AT})}^{-1}B, \quad i=1,\dots,k.
\]
\end{proof}
\subsection{Design} 
Proposition~\ref{th:corridor} specifies the output corridor for closed-loop impulsive feedback system  \eqref{eq:1c}, \eqref{eq:1d}, \eqref{eq:modulation}  whose parameters are known and that exhibits a 1-cycle with certain parameters. 
Now consider an algorithm that solves  a converse problem  formulated in Section~\ref{sec:problem}, where the impulsive feedback described by \eqref{eq:1d}, \eqref{eq:modulation}  keeps the output of the continuous model in \eqref{eq:lin_NMB} within a corridor given by \eqref{eq:range-open}.
\paragraph*{Algorithm~1}
\begin{description}
    \item[Step 1:]\label{itm:S1} \  Define the parameters of \eqref{eq:lin_NMB} and the  desired output corridor $[\bar y_{\min}, \bar y_{\max}]$.
\item[Step 2:]\label{itm:S2} \  Select a suitable interval for the period ${\cal T}=[T_{\min}, T_{\max}]$. By gridding over $T_j\in {\cal T}$, calculate
\begin{equation}\label{eq:roots_dy_T_j}
  \tau_{i,j} : \quad  C\e^{A\tau}{(I-\e^{AT_j})}^{-1}AB |_{\tau=\tau_i}=0,
\end{equation}
and evaluate 
\begin{align*}
    z^{(j)}_{\max}&=\max_{i=1,k} C\e^{A\tau_{i,j}}{(I-\e^{AT_j})}^{-1}B,\\
    z^{(j)}_{\min}&=\min_{i=1,k} C\e^{A\tau_{i,j}}{(I-\e^{AT_j})}^{-1}B.
\end{align*}
\item[Step 3:]\label{itm:S3} \  Obtain the period of the 1-cycle $T$ as $T=T_k$ by solving 
\begin{equation}\label{eq:design_T}
  k= \arg\min_{j} \left| \frac{\bar y_{\max}}{\bar y_{\max}- \bar y_{\min}}-\frac{z^{(j)}_{\max}}{z^{(j)}_{\max}-z^{(j)}_{\min}}\right| 
\end{equation}
The ratios in  \eqref{eq:design_T} are independent of $\lambda$ due to the linearity of \eqref{eq:1a}.  

\item[Step 4:]\label{itm:S3} \  With the value of $T$ obtained in Step~3, calculate the impulse weight of the  1-cycle 
\begin{equation}\label{eq:lambda}
    \lambda= \frac{\bar y_{\max}-\bar y_{\min}}{z^{(k)}_{\max}-z^{(k)}_{\min}}.
\end{equation}
\item[Step 5:]\label{itm:S3} \ Evaluate the modulation functions $\Phi(\cdot)$ and $F(\cdot)$ rendering the desired orbitally stable 1-cycle by applying the approaches described in either \cite{MPZh23} or \cite{MPZh23a}. The implementation of this step hinges on parametrization of the modulation functions and does not necessarily have a unique solution.
\end{description}

To characterize transient solutions of the IGO, stability of the 1-cycle has to be established.
\begin{prop}[Stability]\label{th:stability} The periodic solution satisfying the conditions of Proposition~\ref{th:corridor} is orbitally stable if and only if the Jacobian of the map $Q(\cdot)$
\begin{equation}\label{eq:closed_loop}
    Q^\prime(X)= \e^{AT}+KC,
\end{equation}
where
\begin{equation}\label{eq:gain}
    K= \begin{bmatrix}
    J &D
\end{bmatrix}\begin{bmatrix}
 F^\prime(CX)  \\  \Phi^\prime(\bar y(CX) 
\end{bmatrix}, J=\e^{AT}B, D=AX
\end{equation}
    is Schur-stable. If, in addition, the initial conditions to \eqref{eq:1a} are within the basin of attraction of the fixed point $X$ and all the eigenvalues of $Q^\prime(X)$ are positive, then the convergence of the sequence $\bar y(t_k)$ to the 1-cycle is monotonous.
\end{prop}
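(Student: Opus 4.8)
The plan is to reduce orbital stability of the $1$-cycle to local asymptotic stability of the fixed point $X$ of the stroboscopic map $Q$ in~\eqref{eq:1-cycle}, and then to obtain the announced formula for $Q'(X)$ by direct differentiation. Since $X_{n+1}=Q(X_n)$ governs the sampled state $X_n=x(t_n^-)$, while~\eqref{eq:1d} recovers the whole trajectory continuously from $X_n$, a periodic solution is orbitally (asymptotically) stable precisely when $X$ is an asymptotically stable fixed point of $Q$. By the principle of linearized stability for discrete maps, this holds, in the hyperbolic case, if and only if the spectral radius of the Jacobian $Q'(X)$ is strictly less than one, i.e.\ $Q'(X)$ is Schur-stable; an eigenvalue outside the unit disk renders $X$ unstable and supplies the ``only if'' direction.

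The core computation is the Jacobian itself. Writing $Q(\xi)=\e^{A\Phi(C\xi)}\bigl(\xi+F(C\xi)B\bigr)$ and letting $y=C\xi$ (a scalar, as $C$ is a row), I would differentiate by the product and chain rules. The factor $\xi+F(y)B$ contributes $I+F'(y)BC$, so premultiplying by $\e^{A\Phi}$ yields the term $\e^{AT}+F'(y)\e^{AT}BC$; the parametric matrix exponential contributes, using that $A$ commutes with $\e^{A\Phi}$, the rank-one term $\bigl(A\e^{A\Phi(y)}(\xi+F(y)B)\bigr)\,\Phi'(y)\,C$. Evaluating at the fixed point, where $\Phi(CX)=T$, $F(CX)=\lambda$, and $\e^{AT}(X+\lambda B)=Q(X)=X$, the bracketed vector collapses to $AX$. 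Hence
\[
Q'(X)=\e^{AT}+\e^{AT}B\,F'(CX)\,C+AX\,\Phi'(CX)\,C=\e^{AT}+KC,
\]
with $K=\e^{AT}B\,F'(CX)+AX\,\Phi'(CX)=[\,J\ \ D\,]\,[\,F'(CX)\ \ \Phi'(CX)\,]^\intercal$, $J=\e^{AT}B$, $D=AX$, which is exactly~\eqref{eq:gain}. This identifies $Q'(X)$ as a rank-one update of $\e^{AT}$ and, together with the linearization argument above, settles the ``if and only if'' claim.

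For the monotonicity statement I would work with the linearized error $e_n=X_n-X$, obeying $e_{n+1}=Q'(X)e_n$, and the scalar output error $Ce_n=\bar y(t_n)-CX$. Assuming in addition that $Q'(X)$ has only real positive eigenvalues, a spectral decomposition of the linearized recursion gives $\bar y(t_n)-CX=\sum_i\beta_i\mu_i^{n}$ with $0<\mu_i<1$; each term $\mu_i^{n}$ is monotonically decreasing, so positive reality of the spectrum is exactly what excludes the sign alternation produced by a negative eigenvalue and the oscillation produced by a complex pair. To upgrade this to genuine step-by-step monotonicity I would exploit the positivity of the model: since $A$ is Metzler, $\e^{AT}\ge 0$ entrywise, and the modulation functions are monotone ($F$ non-increasing, $\Phi$ non-decreasing), which makes $Q$ order-preserving on the positive cone; an initial state ordered relative to $X$ then generates a monotone iterate sequence $\{X_n\}$, whence $\bar y(t_n)=(X_n)_3$ converges monotonically.

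I expect the main obstacle to be the monotonicity part rather than the Jacobian or the stability equivalence. The difficulty is that a sum of several decaying positive-ratio geometric terms need not be monotone for an arbitrary initial error in the basin of attraction---positive eigenvalues alone rule out oscillation but not a finite number of sign changes in $\bar y(t_n)-CX$. Making the claim rigorous therefore requires either restricting to initial data comparable to $X$ in the componentwise order and invoking the order-preserving property of $Q$, or sharpening the spectral argument through the rank-one structure $Q'(X)=\e^{AT}+KC$ to control how the output functional $C$ samples the individual modes. Reconciling the stated basin-of-attraction hypothesis with true monotonicity of the sampled output is the delicate point I would need to pin down.
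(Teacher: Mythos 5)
Your proposal is correct in substance and follows essentially the same route as the paper, the difference being that the paper's proof is a three-line citation argument while you fill in the details. The paper asserts that \eqref{eq:closed_loop}--\eqref{eq:gain} follow from \cite[Proposition~3]{MPZh23}, that orbital stability is governed by the eigenvalues of $Q'(X)$ (the characteristic multipliers), and that their positivity yields monotonous convergence, citing \cite{C99}. Your product/chain-rule differentiation of $Q(\xi)=\e^{A\Phi(C\xi)}\left(\xi+F(C\xi)B\right)$, with the collapse of $A\e^{AT}(X+\lambda B)$ to $AX$ at the fixed point, reproduces exactly the formula the paper imports, and your reduction of orbital stability of the 1-cycle to asymptotic stability of the fixed point of the stroboscopic map, since \eqref{eq:1d} reconstructs the continuous trajectory from $X_n$, is the same standard mechanism. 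Your hyperbolicity caveat on the ``if and only if'' is in fact more careful than the paper, which states the equivalence without qualification (Schur-stability is sufficient, and necessity holds only modulo eigenvalues on the unit circle).

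On the monotonicity claim your diagnosis of the difficulty is sound, and it is a gap the paper does not close either: with positive real multipliers, $\bar y(t_n)-CX=\sum_i\beta_i\mu_i^{\,n}$ excludes sign alternation and complex-pair oscillation, but coefficients $\beta_i$ of mixed signs can still produce finitely many sign changes, so the sampled output error is only \emph{eventually} monotone; the paper simply defers to \cite{C99} and its claim should be read in that qualitative, non-oscillatory sense. Be warned, though, that your proposed fallback via order preservation would not work as stated: $Q$ is not order-preserving on the positive cone, because $F$ is non-increasing (a larger sampled output yields a smaller dose), so the feedback channel acts anti-monotonically; moreover $t\mapsto\e^{At}$ is not entrywise monotone in $t$ for this Metzler $A$ (the off-diagonal entries of $\e^{At}$ first grow from zero), so a larger $\Phi$ does not shrink the flow map monotonically either. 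Since you flag that route as tentative and do not rely on it, the main claims stand, but the order-theoretic repair should be discarded rather than pursued.
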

\begin{proof}
    The characterization of Jacobian \eqref{eq:closed_loop} as \eqref{eq:gain} follows from  \cite[Proposition~3]{MPZh23}. Orbital stability of 1-cycle implies that the orbit possesses a basin of attraction.  The eigenvalues of $Q^\prime(X)$ are the characteristic multipliers and their positivity yields monotonous convergence of $\bar y(t_k)$ to $CX$, see \cite{C99}.
\end{proof}
The derivatives $F^\prime$ and $\Phi^\prime$ exist since the modulation functions are assumed to be continuous. When $F^\prime(CX)=\Phi^\prime(CX)=0$, orbital stability follows trivially due to $A$ being Hurwitz. 

 In view of the dosing application described in Section~\ref{sec:NMB}, the basin of attraction of the designed 1-cycle has to include the point $x=0$ since it designates the starting point of the NMB procedure.

The expression in \eqref{eq:closed_loop} is exactly the same as what describes the closed-loop dynamics in static output feedback design \cite{SAD97} for LTI systems. In the IGO, the role of control gains is although played by the slopes of the modulation functions, i.e. $F^\prime(CX)$ and  $\Phi^\prime(CX)$. Clearly, the case of constant modulation functions corresponds to zero gain. It is instructive to note that both the continuous part of the IGO (the plant) and the discrete part of it (the pulse-modulated controller) feature only positive signals. Yet, since $F(\cdot)$ is  non-increasing, $\Phi(\cdot)$ is non-decreasing, $J>0$, and $D<0$ (see \cite[Proposition~3]{MPZh23}), then
\[ JF^\prime(\cdot)+D\Phi^\prime(\cdot)\le 0, \]
and the feedback is negative, i.e. it enforces faster convergence to the stationary solution (1-cycle) and decreases the sensitivity of the closed-loop system to model uncertainty. The control law implemented by the IGO can therefore be described as {\it positively-valued negative feedback}.

\subsection{Nonlinear plant}
Nonlinear models are frequent in process control and biomedical systems. The IGO possesses highly nonlinear dynamics \cite{ZCM12b} due to the use of pulse-modulated feedback even when the continuous controlled plant (cf. \eqref{eq:1a}) is linear. This opens up for generalizing the IGO to continuous plants with static nonlinearities in the control signals or/and measurements.

Consider a Hammerstein model

\begin{equation}\label{eq:hammerstein}
    \dot x=Ax+B\varphi_h(u), \quad y=Cx,
\end{equation}
where $\varphi_h(\cdot)$ is a given positive continuous function. In context of the IGO, when \eqref{eq:hammerstein} is the plant,  the pulse-modulated feedback law becomes
\begin{equation}                             \label{eq:2a}
x(t_n^+) = x(t_n^-) +\varphi_h(\lambda_n) B,                                         
\end{equation}
whereas \eqref{eq:1a} can be kept intact. Let $\bar \lambda_n=\varphi_h(\lambda_n)$. Then, the case of Hammerstein model can be reduced to that already considered in Section~\ref{sec:problem}, i.e. \eqref{eq:1a}, \eqref{eq:modulation}, where the modulation functions are modified to be
\[
T_n =\Phi(y(t_n)), \quad \bar\lambda_n=F(y(t_n)).
\]
The weight $\lambda_n$ has now to solve 
\begin{equation}\label{eq:hammerstein_amplitude}
    \varphi_h(\lambda_n)= F(y(t_n)).
\end{equation}
When an inverse of $\varphi_h(\cdot)$ is available then
\[
\lambda_n= \varphi^{-1}( F(y(t_n))= (\varphi^{-1}\circ F)(t_n),
\]
where $\circ$ is composition of two functions. Otherwise, equation \eqref{eq:hammerstein_amplitude} is to be solved numerically for each value $y(t_n)$. Calculating the controller output via a numerical evaluation of the roots of an algebraic equation is common practice in pulse-modulated systems \cite{GC98}. Notably, the presence of a static input nonlinearity in the plant does not impact the frequency modulation mechanism, i.e $\Phi(\cdot)$.

Consider now a Wiener model
\begin{equation}\label{eq:wiener}
    \dot x=Ax+Bu, \quad y=\varphi_w(\bar y), \quad \bar y=Cx,
\end{equation}
where  $\varphi_w(\cdot)$ is a positive continuous function. Given an impulsive input, the continuous plant is already in the form of \eqref{eq:1a}.  The feedback law also preserves its form but the modulation functions become
\[
T_n =(\Phi\circ \varphi_w )(\bar y(t_n)), \quad \lambda_n=(F\circ \varphi_w )(\bar y(t_n)).
\]

\color{black}

\section{Output corridor control of NMB}\label{sec:design}
%
In this section, Algorithm~1 is applied step-by-step to design impulsive feedback for the NMB model described in Section~\ref{sec:NMB}.
\paragraph*{Step~1} Consider model \eqref{eq:lin_NMB},  \eqref{eq:nonlin_NMB} where the individualization parameters are set to the mean population values $\alpha=0.0374$, $\gamma=2.6677$, and $C_{50}=3.2425$.  The elements of the state matrix $A$ in \eqref{eq:1a} are then $a_1=v_1\alpha$, $a_2=v_2\alpha$, $a_3=v_3\alpha$, $g_1=v_1\alpha$, and  $g_2=v_2 v_3 \alpha^2$. Impulse responses of the model to impulses of different amplitudes are shown in Fig.~\ref{fig:impulse} to illustrate the nonlinear dynamics.

From the clinical data from \cite[Fig.~4]{SWM12}, the NMB depth is to be kept within the range $2\% \le y(t) \le 10\%$ throughout the surgery. 
The nonlinear function in \eqref{eq:nonlin_NMB} is invertable
\[
\bar y(t)=\varphi^{-1}(y(t))= C_{50} {\left({\frac{100}{y(t)}-1}\right)}^{\frac{1}{\gamma}},
\]
and the desired interval of $y(t)$ is equivalent to 
\[
\bar y_{\min}=7.3889 \le \bar y(t) \le  13.9463=\bar y_{\max}.
\] 
\paragraph*{Step~2} Select ${\cal T}=[15, 45]$. The ratio 
${z^{(j)}_{\max}}/{(z^{(j)}_{\max}-z^{(j)}_{\min})}$
is plotted as the  function of $T_j$ in Fig.~\ref{fig:T} (blue curve).

\paragraph*{Step~3} 
Solving \eqref{eq:design_T} numerically (see Fig.~\ref{fig:T}) gives $T=37.3834$.
\paragraph*{Step~4} 
The maintenance dose necessary to elevate the drug concentration from $\bar y_{\min}$ to $\bar y_{\max}$ is given by \eqref{eq:lambda}, i.e. $\lambda= 415.8412$. An equivalent way of defining the designed 1-cycle is the fixed point of Proposition~\ref{th:fp}
 $X^\intercal=\begin{bmatrix}  136.4461 &44.9637 & 7.4309\end{bmatrix}$.  The open-loop impulse response of the plant with the extrema is depicted in Fig.~\ref{fig:output_period}. 
 \paragraph*{Step~5} 
 
The modulation functions are subject to 
\begin{equation}\label{eq:lambda_T}
    F(\bar y_0)=\lambda, \quad \Phi(\bar y_0)=T, \quad \bar y_0=CX.
\end{equation}
Since $\varphi(\cdot)$ is (monotonously) decreasing, in contrast with the case of a LTI plant in Section~\ref{sec:solution}, $F(\cdot)$ has to be increasing and $\Phi(\cdot)$ has to be decreasing for  Wiener NMB model \eqref{eq:lin_NMB}, \eqref{eq:nonlin_NMB}. Indeed, when the NMB level $y(t)$ climbs too high, higher drug doses have to be administered more often.

Let the modulation functions be selected as
\[
F(\xi)\triangleq (\bar F \circ \varphi)(\xi), \quad \Phi(\xi)\triangleq (\bar \Phi \circ \varphi)(\xi),
\]
where $\bar F (\cdot), \bar \Phi (\cdot)$ represent the design degrees of freedom of the IGO and have to guarantee the desired 1-cycle in the closed-loop system as well as its (orbital) stability. Select these modulation functions as piecewise affine, i.e.
\begin{align*}
    \bar \Phi (\xi)= \begin{cases} \Phi_2 &   \Phi_2 < k_2\xi +k_1, \\
     k_2\xi +k_1 & \Phi_1 \le  k_2\xi +k_1 \le \Phi_2, \\
    \Phi_1  &  k_2\xi +k_1 < \Phi_1, 
     \end{cases}
\end{align*}
\begin{align*}
    \bar F (\xi)= \begin{cases} F_1 &  k_4\xi +k_3< F_1, \\
     k_4\xi +k_3 & F_1 \le k_4\xi +k_3 \le F_2, \\
    F_2 & F_2 <k_4\xi +k_3.
     \end{cases}
\end{align*}
From the bounds on the modulation functions, it follows that the feedback cannot administer a dose that is greater than $F_2$ or less than $F_1$. Further, no dose is administered sooner than $\Phi_1$ from the previous one and at least one dose is administered within a time interval of $\Phi_1$. These bounds are easily established from the available manual medication protocols in general anesthesia.



\begin{figure}[ht]
\centering 
\includegraphics[width=0.8\linewidth]{./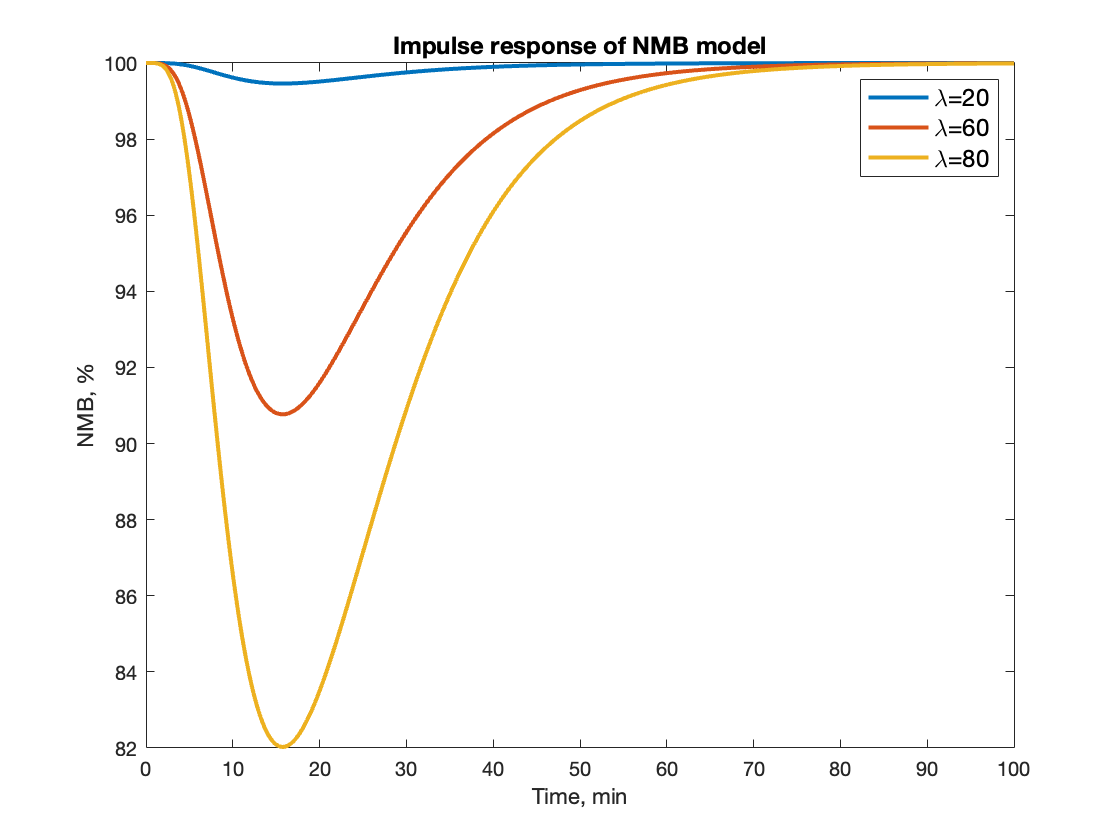}
\caption{Impulse response of NMB model \eqref{eq:lin_NMB}, \eqref{eq:nonlin_NMB} to $\lambda \delta(t)$, where $\delta(t)$ is Dirac delta. A dose of $\lambda$ causes a deeper NMB and the muscle relaxing  effect subsides with time due to drug elimination.}\label{fig:impulse}
\end{figure}

\begin{figure}[ht]
\centering 
\includegraphics[width=0.8\linewidth]{./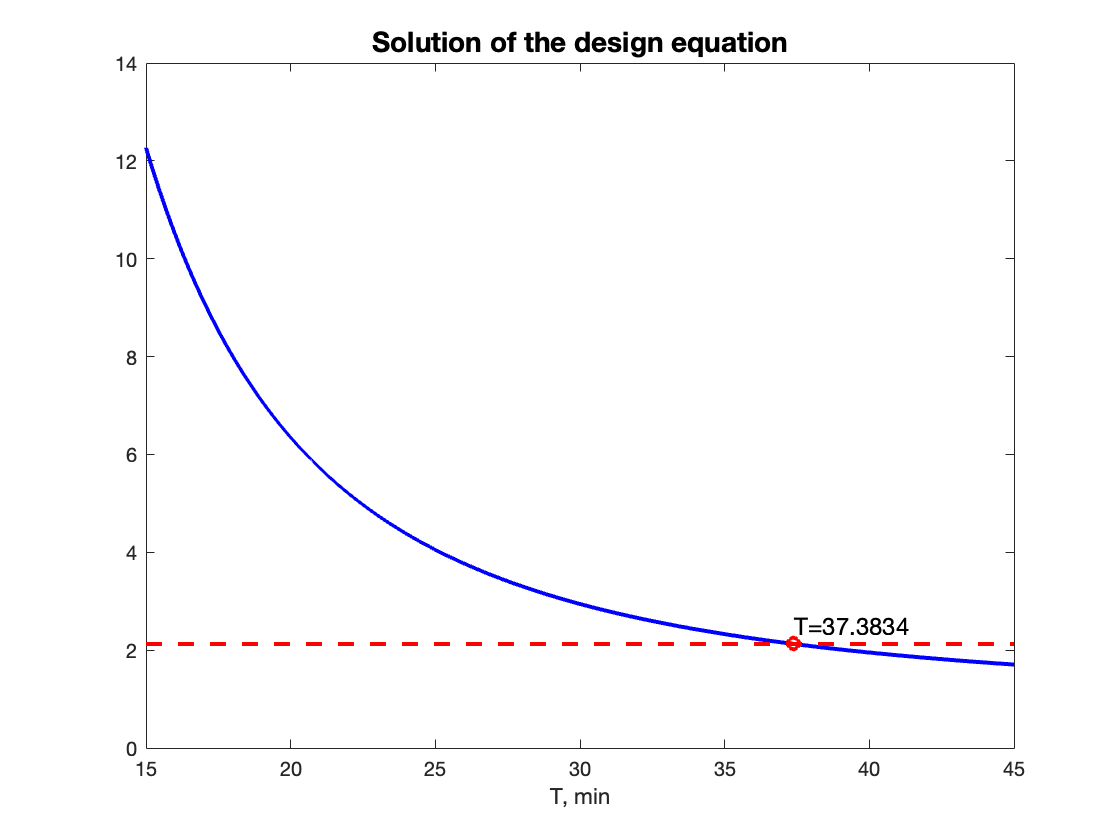}
\caption{Solution to equation \eqref{eq:design_T} for the assumed numerical values. The desired ratio ${\bar y_{\max}}/{(\bar y_{\max}-\bar y_{\min})}$  is in red. The function ${z^{(j)}_{\max}}/{(z^{(j)}_{\max}-z^{(j)}_{\min})}$ is in blue. The solution is $T=37.3834$.}\label{fig:T}
\end{figure}

\begin{figure}[ht]
\centering 
\includegraphics[width=0.75\linewidth]{./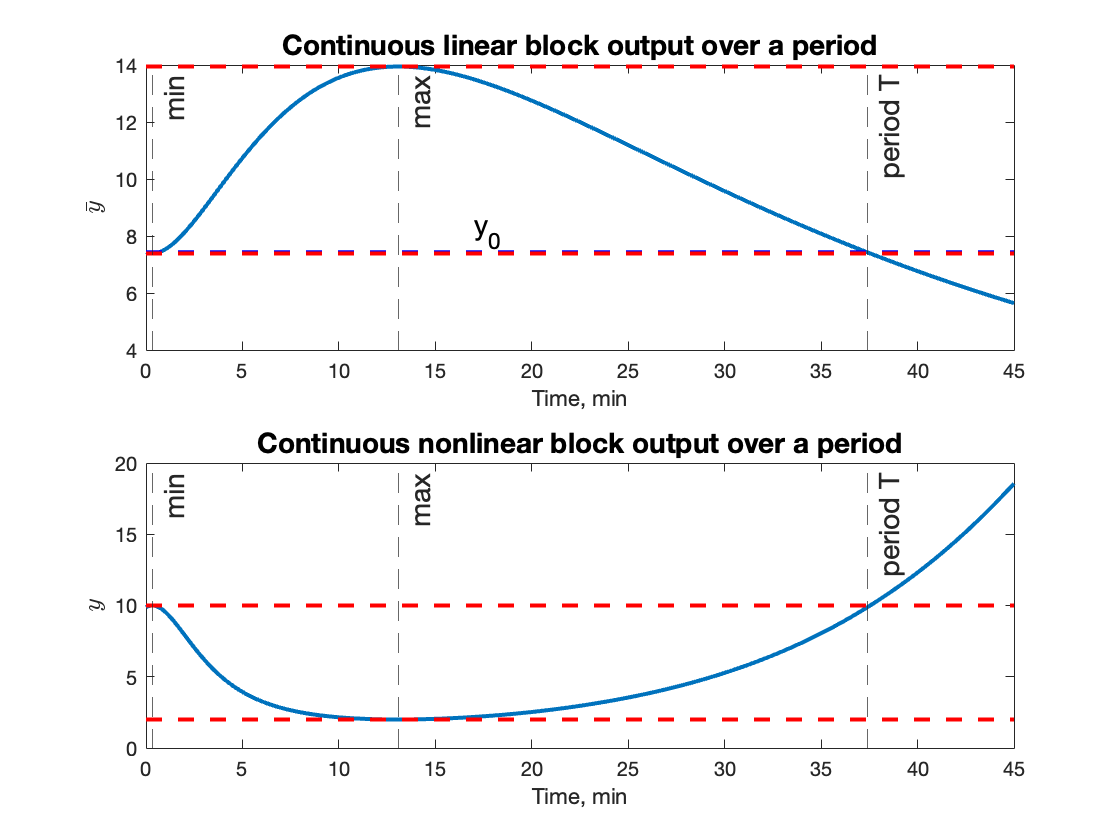}
\caption{Upper plot: The output $\bar y(t)$ in open loop in response to the impulsive input $\lambda\delta(t)$ (blue) and the corridor bounds $\bar y_{\max}, \bar y_{\min}$ (dashed red). The initial condition $\bar y_0=x_3$ (dashed blue) is slightly over $\bar y_{\min}$. Lower plot: the output $y(t)$ (blue) with the corresponding bounds $y_{\max},y_{\min}$ (dashed red) }\label{fig:output_period} 
\end{figure}

\begin{figure}[ht]
\centering 
\includegraphics[width=0.7\linewidth]{./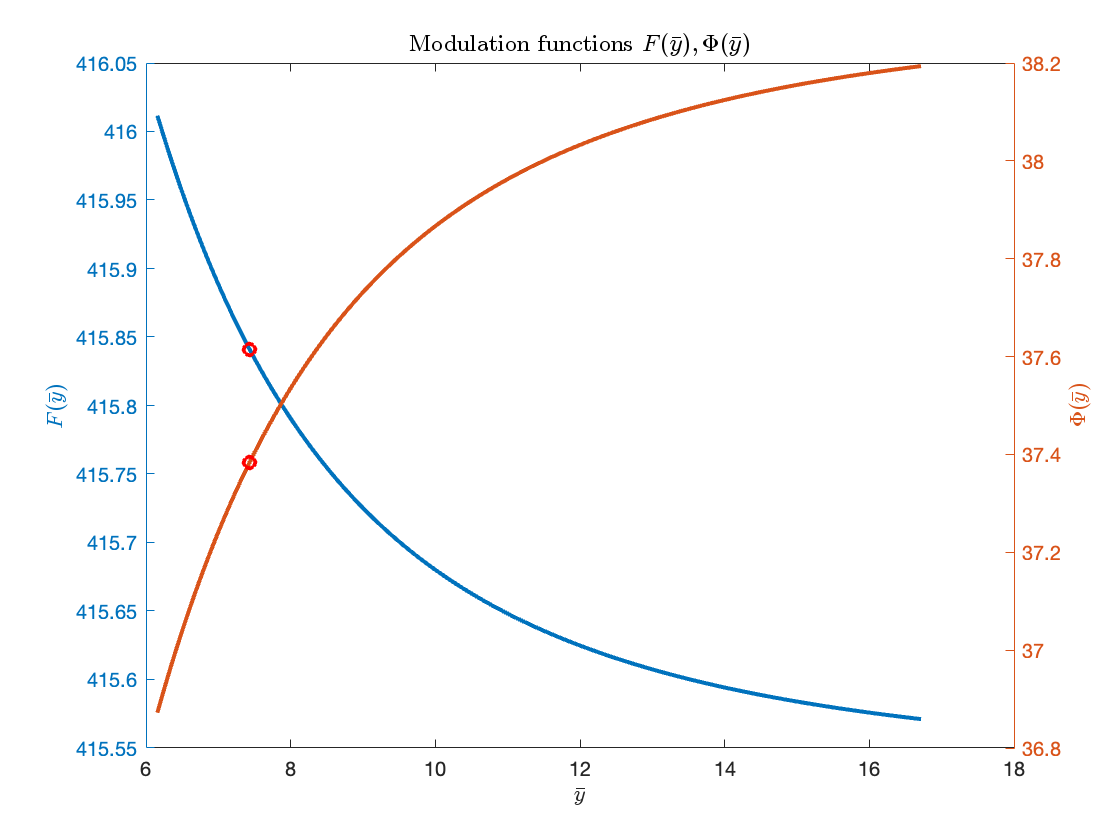}
\caption{The modulation functions $F(\bar y)$ (blue) and $\Phi(\bar y) $ (red). The cycle parameters $F(\bar y_0)=\lambda, \Phi(\bar y_0)=T$ are marked by red dot.}\label{fig:F_Phi}
\end{figure}
\paragraph*{Stability} To guarantee orbital stability of the designed 1-cycle, the slopes of the modulation functions have to satisfy the conditions of Proposition~\ref{th:stability}. By applying the chain rule and assuming that $F^\prime(\cdot)$ and $\Phi^\prime(\cdot)$ do not reach saturation
\begin{align*}
     F^\prime(\bar y_0)&=\bar F^\prime(\bar y_0)\varphi^\prime(\bar y_0)= k_4 \varphi^\prime(\bar y_0),\\ 
     \Phi^\prime(\bar y_0)&= \bar \Phi^\prime(\bar y_0)\varphi^\prime(\bar y_0)= k_2 \varphi^\prime(\bar y_0),
\end{align*}
where
\[
\varphi^\prime(\xi)= -\frac{\gamma 100 C_{50}^\gamma  \xi^{\gamma-1}}{ {(C_{50}^\gamma + {\xi}^\gamma)}^2}.
\]
According to Proposition~\ref{th:stability}, orbital stability of the designed 1-cycle is guaranteed by the eigenvalues of \eqref{eq:closed_loop} being within unit circle, where
\begin{align}\label{eq:gain_NMB}
K&= 
 \varphi^\prime(\bar y_{0}) \begin{bmatrix}
    J &D
\end{bmatrix}\begin{bmatrix}
 \bar F^\prime(\varphi(\bar y_{0}))  \\  \bar \Phi^\prime(\varphi(\bar y_{0})) \end{bmatrix}\nonumber\\
 &= \varphi^\prime(\bar y_{0}) \begin{bmatrix}
    J &D
\end{bmatrix} \begin{bmatrix} k_4 \\ k_2
\end{bmatrix},
\end{align}
and
\[
\varphi^\prime(\bar y_{0})=  -3.1921.
\]
Choosing $k_4=0.0313, k_2= -0.0940$ renders the eigenvalue spectrum of $Q(X)$ 
\[
\sigma \left( Q(X) \right)= \{ 0.1575, \ 0.0130, 3.5206\cdot 10^{-7}\},
\]
and the designed 1-cycle is orbitally stable as the spectral radius of the Jacobian is $\rho(Q(X) )=0.1575$. The feedback in \eqref{eq:gain_NMB} improves the convergence to the desired periodic solution compared to an open-loop mode 
since 
\[
\sigma \left( \e^{AT} \right)= \{0.2471, \  0.0037, \ 8.4715\cdot 10^{-7} \}.
\]
\paragraph*{Modulation at fixed point}
For the adopted parametrization of the modulation functions, 
\begin{align*}
    F(\bar y_{0})&= (\bar F \circ \varphi)(\bar y_{0})=\bar F ( \varphi(\bar y_{0}))=k_4 \varphi(\bar y_{0})+k_3= \lambda,\\
    \Phi(\bar y_{0})&= (\bar \Phi \circ \varphi)(\bar y_{0})= \bar \Phi (\varphi(\bar y_{0}))= k_2 \varphi(\bar y_{0})+k_1=T.
\end{align*}
The values $k_3=415.5321, k_1=38.3105$ are obtained by solving the equations above. The upper and lower bounds of the modulation functions are selected as $F_1=200, F_2=5000, \Phi_1=5, \Phi_2=45$. The resulting modulation functions $F(y),\Phi(y)$ are depicted in Fig.~\ref{fig:F_Phi}.
\begin{figure}[ht]
\centering 
\includegraphics[width=0.8\linewidth]{./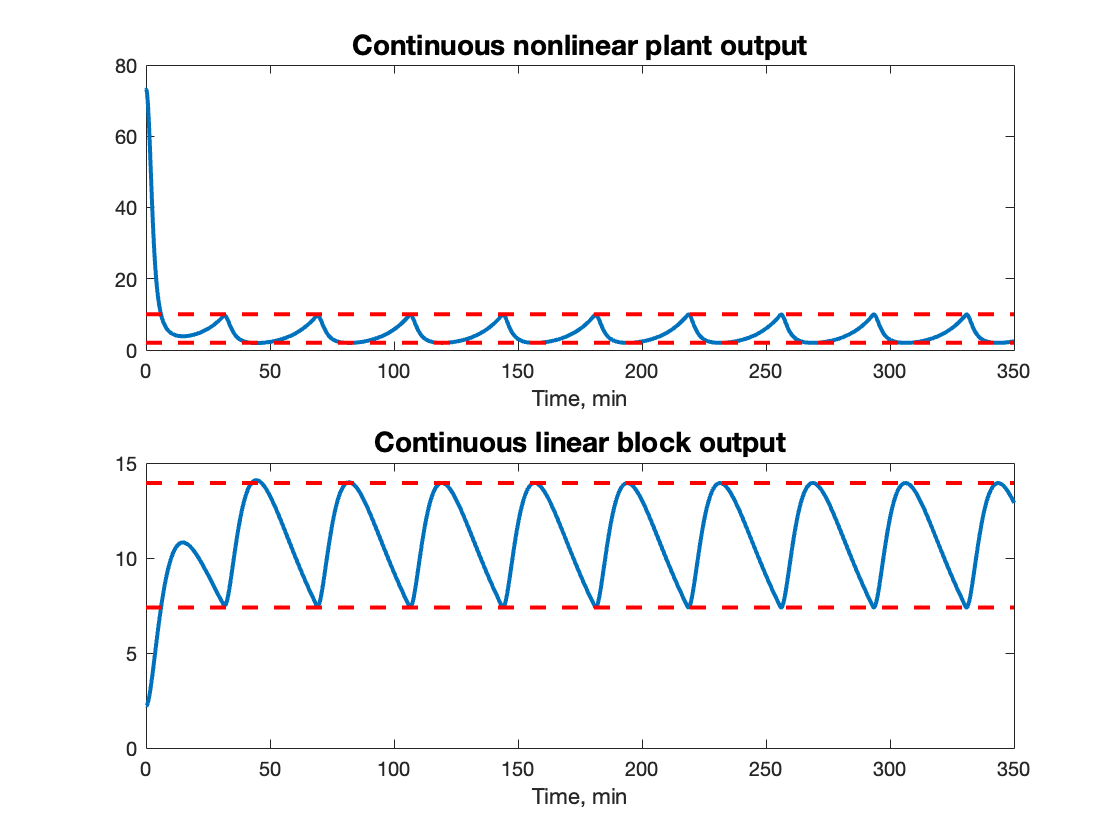}
\caption{Convergence to the designed cycle from an initial condition outside of the orbit (undergoes).  The sequence $y(t_n)$ is monotone, cf. $\sigma \left( Q(X) \right)$.}\label{fig:1-cycle}
\end{figure}

\section{CONCLUSIONS}
The problem of controlling the output of a positive Wiener or Hammerstein system to a pre-defined interval of values is considered. It is solved for a third-order time-invariant system by designing a pulse-modulated feedback that renders an orbitally stable periodic solution of a certain type in the closed-loop system. The character of the convergence from a feasible initial condition to the periodic solution is controlled by the slopes of the frequency and amplitude modulation functions. The resulting closed-loop dynamics are identical to those of the impulsive Goodwin's oscillator in 1-cycle. The proposed control approach is illustrated by simulation on a feedback drug dosing application in neuromuscular blockade.

\bibliography{refs,observer}
\bibliographystyle{ieeetr}

\end{document}